\newcommand{\pal}{\ensuremath\text{Pal}}
\newcommand{\T}{\ensuremath{\mathcal T}}
\renewcommand{\P}{\ensuremath{\mathcal P}}
\DeclareMathOperator{\C}{\mathcal{C}}
\DeclareMathOperator{\R}{\mathcal{R}}
\newcommand{\lang}{\ensuremath\mathcal{L}}
\newcommand{\bigo}{\ensuremath\mathcal{O}}
\newcommand{\m}[1]{\ensuremath\widetilde{#1}}
\newcommand{\tailleCercle}{4pt}
\newcommand{\tailleNoeud}{6pt}
\newcommand{\tikzScale}{0.6}
\newtheorem{thm}{Theorem}
\newtheorem{prop}[thm]{Proposition}
\newtheorem{lem}[thm]{Lemma}
\renewenvironment{proof}{\paragraph{Proof.}}{\hfill$\square$\\}
\DeclareMathOperator{\Pal}{Pal}
\definecolor{lgray}{gray}{0.7}
\title{Palindromic complexity of trees}
\author{Sre\v{c}ko Brlek$^1$
\and Nadia Lafrenière$^1$
\and Xavier Provençal$^2$
}
\institute{${}^1$Université du Québec à Montréal, Montréal, Québec, Canada\\
$^2$Université de Savoie, Chambéry, France\\
\mailsa \\
\mailsb \\
\mailsc
}
\authorrunning{Brlek, Lafrenière, Provençal}
\newcommand{\keywords}[1]{\par\addvspace\baselineskip
\noindent\keywordname\enspace\ignorespaces#1}
\begin{document}
\maketitle

\sloppy

\begin{abstract}
 We consider finite trees with edges labeled by letters on a finite alphabet $\varSigma$. Each  pair  of nodes defines a unique labeled path whose trace is a word of the free monoid $\varSigma^*$. The set of all such words defines the language of the tree. In this paper, we investigate the palindromic complexity of trees and provide hints for an upper bound on the number of distinct palindromes in the language of a tree.
\end{abstract}
\keywords{Words, Trees, Language, Palindromic complexity, Sidon sets}
\section{Introduction}
The palindromic language of a word has been extensively investigated recently, see for instance \cite{abcd} and  more recently \cite{PrBrRe,BrRe}.  In particular, Droubay, Justin and
Pirillo \cite{DrJuPi} established the following property:

\begin{thm}[\rm Proposition 2 \cite{DrJuPi}]\label{thm DrJuPi}
  A word $w$ contains at most $|w|+1$ distinct palindromes.
\end{thm}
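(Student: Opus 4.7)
The plan is to proceed by induction on $|w|$, using the notion of the longest palindromic suffix. Writing $w = w_1 w_2 \cdots w_n$, I will denote by $w^{(k)} = w_1 \cdots w_k$ the prefix of length $k$, and show by induction on $k$ that $w^{(k)}$ has at most $k+1$ distinct palindromic factors (counting the empty word as a palindrome). The base case $k=0$ is immediate since $w^{(0)} = \varepsilon$ admits exactly one palindrome.

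For the inductive step the goal reduces to a single claim: appending one letter to a word increases the number of distinct palindromic factors by at most one. The key observation is that any palindromic factor of $w^{(k)}$ which is \emph{not} already a factor of $w^{(k-1)}$ must occur as a suffix of $w^{(k)}$, for otherwise it would lie strictly inside $w^{(k-1)}$. So everything comes down to analysing the palindromic suffixes of $w^{(k)}$.

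Now let $p = \longest(w^{(k)})$ denote the longest palindromic suffix of $w^{(k)}$. The crucial step is to show that every strictly shorter palindromic suffix $q$ of $w^{(k)}$ already occurs in $w^{(k-1)}$. Since $q$ is a suffix of $p$ and $p$ is a palindrome, $q$ is also a prefix of $p$; the occurrence of $q$ as a prefix of the suffix $p$ of $w^{(k)}$ thus ends strictly before position $k$, and so this occurrence lies entirely inside $w^{(k-1)}$. Consequently $p$ is the only palindromic suffix of $w^{(k)}$ that may fail to appear in $w^{(k-1)}$, which proves the claim and closes the induction.

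The main subtlety, and the step I expect to require the most care, is this last argument that the shorter palindromic suffixes reappear as prefixes of $p$ earlier in $w^{(k)}$; one has to be precise about indices to ensure the occurrence truly lies in $w^{(k-1)}$ rather than merely ending at position $k$. Everything else is bookkeeping, and the telescoping of the per-letter increment of $1$ against the base value $1$ at $k=0$ yields the final bound $|w|+1$.
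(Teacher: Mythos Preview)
Your argument is correct and is precisely the classical proof due to Droubay, Justin and Pirillo: induction on the length, with the key fact that among all palindromic suffixes of the current prefix only the longest one can be new, because any shorter palindromic suffix $q$ reappears as a prefix of the longest palindromic suffix $p$ (using that $q$ is a suffix of $p$, $p=\m{p}$ and $q=\m{q}$), hence already lies inside $w^{(k-1)}$. Your remark about checking indices is well placed; the inequality $|q|<|p|$ is exactly what guarantees that this earlier occurrence ends at position $k-|p|+|q|<k$.

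Note, however, that the present paper does not give its own proof of this statement: Theorem~\ref{thm DrJuPi} is quoted from \cite{DrJuPi} and used as a known result, so there is nothing to compare your approach against within this paper. What you wrote is essentially the original argument from that reference.
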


Several families of words have been studied for their total palindromic
complexity, among which periodic words \cite{bhnr}, fixed points of morphism
\cite{hks} and Sturmian words \cite{DrJuPi}. \\

Considering words as geometrical objects, we can extend some definitions. For example, the notion of palindrome appears in the study of multidimensional geometric structures, thus introducing a new characterization. Some known classes of words are often redefined as digital planes \cite{BV,LR}, and the adjacency graph of structures obtained by symmetries appeared more recently \cite{DV12}. In the latter article, authors show that the obtained graph is a tree and its palindromes have been described by Domenjoud, Proven\c{c}al and Vuillon \cite{DPV}.
The trees studied by Domenjoud and Vuillon \cite{DV12} are obtained by iterated
palindromic closure, just as Sturmian \cite{deLu} and episturmian
\cite{DrJuPi,GJ} words. It has also been shown \cite{DPV} that the total number
of distinct nonempty palindromes in these trees is equal to the number of
edges in the trees. This property highlights the fact that these trees form a
multidimensional generalization of Sturmian words. 

A finite word is identified with a tree made of only one branch. Therefore,
(undirected) trees appear as generalizations of words and it is natural to look
forward to count the patterns occurring in it.
Recent work by Crochemore et al. \cite{squaresInTree} showed that the maximum
number of squares in a tree of size $n$ is in $\Theta(n^{4/3})$.  This is
asymptotically bigger than in the case of words, for which the number of squares
is known to be in $\Theta(n)$ \cite{FraenkelSimpson1998}. We discuss here the
number of palindromes and show that, as for squares, the number of palindromes
in trees is asymptotically bigger than in words. Figure \ref{fig:plusDePal}, taken
from \cite{DPV}, shows an example of a tree having more nonempty palindromes
than edges, so that Theorem \ref{thm DrJuPi} does not apply to trees.

\begin{figure}[ht!]
  \begin{center}
    \begin{tikzpicture}[scale=\tikzScale]
      \foreach \x/\a in { 0/-,1/b,2/a,3/a,4/a,5/b } {
        \fill (2*\x,0) circle (\tailleCercle);
        \ifthenelse{\NOT \x=0}{
          \draw (2*\x,0) -- node[above]{$\a$} (2*\x-2,0);
        }
      }
      \fill (6,1.8) circle (\tailleCercle);
      \draw (6,1.8) --node[left]{$b$} (6,0);
    \end{tikzpicture}
  \end{center}
  \caption{A tree $T$ with $6$ edges and  $7$ nonempty palindromes, presented in \cite{DPV}.}
  \label{fig:plusDePal}
\end{figure}
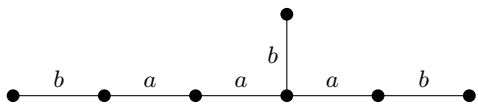

Indeed, the number of nonempty factors in a tree is at most the ways of choosing a couple of edges $(e_i, e_j)$, and these factors correspond to the unique  shortest path from $e_i$ to $e_j$. Therefore, the number of nonempty palindromes in a tree cannot exceed the square of its number of edges. In this article, we exhibit a family of trees with a number of palindromes substantially larger than the bound given by Theorem \ref{thm DrJuPi}. We give a value, up to a constant, for the maximal number of palindromes in trees having a particular language,   and we conjecture that this value holds for any tree.
 
\section{Preliminaries}
Let $\Sigma$ be a finite alphabet, $\Sigma^*$ be the set of finite words over
$\Sigma$, $\varepsilon \in \Sigma^*$ be the empty word and $\Sigma^+ =
\Sigma^*\setminus \{\varepsilon\}$ be the set of nonempty words over $\Sigma$.
We define the \emph{language} of a word $w$ by $\lang(w) = \{f \in \Sigma^* \mid w =
pfs,\ p,s\in \Sigma^*\}$ and its elements are the \emph{factors} of $w$.  
The \emph{reverse} of $w$ is defined by  $\m{w} =
w_{|w|}w_{|w|-1}\ldots w_2w_1$, where $w_i$ is the $i$-th letter of $w$ and
$|w|$, the length of the word. The number of occurrences of a given letter $a$ in the word $w$ is denoted $|w|_a$. A word $w$ is a \emph{palindrome} if $w =
\m{w}$. The restriction of $\lang(w)$ to its palindromes is denoted
$\Pal(w) = \{u \in \lang(w)\mid u = \m{u}\}$. \\

Some notions are issued from graph theory. We consider a \emph{tree} to be an
undirected, acyclic and connected graph. It is well known that the number of
nodes in a tree is exactly one more than the number of edges. The \emph{degree}
of a node is given by the number of edges connected to it. A \emph{leaf} is a
node of degree $1$.  We consider a tree $T$ whose edges are labeled by letters
in $\Sigma$. 
Since in a tree there exists a unique simple path between any pair of nodes,
the function $p(x,y)$ that returns the list of edges along the path from the
node $x$ to the node $y$ is well defined, and so is the sequence $\pi(x,y)$ of
its labels.
The word $\pi(x,y)$ is called a \emph{factor} of $T$ and the set of all its
factors, noted $\lang(T) = \{\pi(x,y) \mid x,y \in \text{Nodes}(T)\}$, is
called the \emph{language} of $T$.  As for words, we define the palindromic
language of a tree $T$ by $\Pal(T) = \{w \in \lang(T) \mid w = \m{w}\}$.
Even though the \emph{size} of a tree $T$ is usually defined by its nodes, we define it here to be  the number of its edges and denote it by $|T|$. This emphasizes the analogy with words, where the length is defined by the number of letters. 
Observe that, since a nonempty path is determined by its first and last edges,
the size of the language of $T$ is bounded by:
\begin{equation}\label{eq:nbMots}
  \lang(T) \leq |T|^2+1.
\end{equation}

Using the definitions above, we can associate a threadlike tree $W$ to a pair of words $\{w, \m{w}\}$. We may assume that $x$ and $y$ are its extremal nodes (the leaves). Then, $w = \pi(x,y)$ and $\m{w} = \pi(y,x)$. The size of $W$ is equal to $|w|=|\m{w}|$. Analogously, $\Pal(W) = \Pal(w) = \Pal(\m{w})$. The language of $W$ corresponds to the union of the languages of $w$ and of $\m{w}$.
For example, Figure \ref{fig:arbreLineaire} shows the word $ababb$ as a
threadlike tree. Any factor of the tree is either a factor of $\pi(x,y)$,
if the edges are read from left to right, or a factor of $\pi(y,x)$, otherwise.

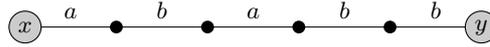
\begin{figure}[h!]
  \centering
  \begin{tikzpicture}[scale=\tikzScale]
    \foreach \x/\a in { 1/a,2/b,3/a,4/b,5/b } { \draw (2*\x-2,0) --node[above]{$\a$} (2*\x,0); }
    \draw[fill=gray!40] (0,0) circle (\tailleNoeud*(1.0/\tikzScale) node {$x$};
    \draw[fill=gray!40] (10,0) circle (\tailleNoeud*(1.0/\tikzScale) node {$y$};
    \foreach \x/\a in { 1,2,3,4 } { \fill (2*\x,0) circle (\tailleCercle); }
  \end{tikzpicture}
  \caption{A threadlike tree represents a pair formed by a word and its reverse.\label{fig:arbreLineaire}}
\end{figure}

For a given word $w$, we denote by $\Delta(w)$ its run-length-encoding, that is the sequence of constant block lengths. For example, for the French word ``appelle'',  $\Delta(\text{appelle}) = 12121$. As well, for the sequence of integers $w=11112111211211$, $\Delta(w)=4131212$. Indeed, each letter of  $\Delta(w)$ represents the length of a block, while the length of $\Delta(w)$ can be associated with the number of blocks in $w$.

Given a fixed alphabet $\Sigma$, we define an infinite sequence of families of trees
\[
  \T_k = \{\text{tree } T \mid |\Delta(f)| \leq k \textrm{ for all } f \in \lang(T) \}.
\]

For any positive integer $k$, we count the maximum number of palindromes of
any tree of $\T_k$ according to its size. To do so, we define the function 
\[
  \P_k(n) = \max_{T \in \T_k, |T| \leq n} |\Pal(T)|.
\]

This value is at least equal to $n+1$. It is known \cite{DrJuPi} that each
prefix $p$ of a Sturmian word contains $|p|$ nonempty palindromes. This
implies that \mbox{$\P_\infty(n) \in \Omega(n)$}. On the other hand, equation
(\ref{eq:nbMots}) provides a trivial upper bound on the growth rate of $\P_k(n)$
since it implies \mbox{$\P_\infty(n) \in \bigo(n^2)$}. We point out that $\P_k(n)$ is an increasing function with respect to $k$. In the following sections we
provide the asymptotic growth, in $\Theta$-notation, of $\P_k(n)$, for $k\leq
4$. Although we have not been able to prove the asymptotic growth for $k\geq
5$, we explain why we conjecture that
\mbox{$\P_\infty(n) \in \Theta( \P_4(n) )$} in section \ref{sec:hypothese}.

\section{Trees of the family $\T_2$}\label{t1}
First recall that, by definition, every nonempty factor of a tree $T$ in $\T_2$ has either one or two blocks of distinct letters. In other terms, up to a renaming of the letters, every factor in $T$ is of the form $a^*b^*$. Therefore, any palindrome in $T$ is on a single letter.
From this, we can deduce a value for $\P_2(n)$ :
\begin{prop}
    The maximal number of palindromes for the family $\mathcal T_2$ is
    $\P_2(n) = n+1$.
\end{prop}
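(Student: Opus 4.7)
The plan is to use the structural remark already stated: every factor of a tree $T \in \T_2$ has the form $a^\ast b^\ast$, hence every palindrome of $T$ is either empty or a monochromatic word $a^k$ with $k \ge 1$. The strategy is to bound, for each letter $a \in \Sigma$, the contribution of $a$-palindromes in terms of the number of $a$-labeled edges, and then to sum over all letters.

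First I would define, for each $a \in \Sigma$, the set $L_a = \{k \ge 1 : a^k \in \Pal(T)\}$. By the remark above, the nonempty palindromes of $T$ are partitioned by their letter, so $|\Pal(T)| = 1 + \sum_{a \in \Sigma} |L_a|$, where the $+1$ counts the empty palindrome $\varepsilon = \pi(x,x)$. Next, observe that each element of $L_a$ is the length of some monochromatic $a$-path in $T$, so in particular $|L_a| \le \max L_a$, and $\max L_a$ itself is bounded by the total number of edges of $T$ labeled $a$, since any such path uses that many distinct $a$-edges.

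Summing these bounds over all letters gives
\[
    \sum_{a \in \Sigma} |L_a| \;\le\; \sum_{a \in \Sigma} \#\{\text{edges of $T$ labeled } a\} \;=\; |T| \;\le\; n,
\]
hence $|\Pal(T)| \le n+1$, which establishes the upper bound $\P_2(n) \le n+1$.

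For the matching lower bound I would exhibit the threadlike tree $W$ with $n$ edges all labeled by the same letter $a$: every factor is of the form $a^k$ with $0 \le k \le n$, so $W \in \T_2$, and $\Pal(W) = \{\varepsilon, a, a^2, \dots, a^n\}$ has exactly $n+1$ elements. The only subtlety worth double-checking is the inclusion of $\varepsilon$ in $\Pal(T)$, which follows from the convention that $\pi(x,x) = \varepsilon$ lies in $\lang(T)$; beyond that, no step requires an obstacle-clearing argument, as the $\T_2$ hypothesis forces enough rigidity to make the counting essentially one-dimensional per letter.
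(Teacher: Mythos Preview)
Your proof is correct and follows essentially the same approach as the paper: both arguments observe that palindromes in $\T_2$ are monochromatic, bound the number of $a$-palindromes by the length of the longest $a$-path (hence by the number of $a$-edges), sum over the alphabet to get the upper bound $n+1$, and exhibit the single-letter threadlike tree for the matching lower bound. Your write-up is slightly more explicit in justifying why the per-letter bounds sum to $|T|$, but there is no substantive difference.
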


\begin{proof}
    The number of nonempty palindromes on a letter $a$ is the length of the longest factor containing only $a$'s. Thus, the total number of palindromes is at most the number of edges in $T$, plus one (for the empty word). This leads directly to $\P_2(n) \leq n+1$.  On the other hand, a word of length $n$ on a single-letter alphabet contains $n+1$ palindromes. This word is associated to threadlike tree in $\T_1$. Therefore, $\P_2(n) = n+1$.
\end{proof}

\section{Trees of the families $\T_3$ and $\T_4$}
In this section, we show that $\{\P_3(n), \P_4(n)\} \subseteq
\Theta(n^\frac{3}{2})$. To do so, we proceed in two steps. First, we present a
construction that allows to build arbitrary large trees in $\T_3$ such that the
number of palindromes in their languages is large enough to show that $\P_3(n)
\in \Omega(n^\frac{3}{2})$. Then, we show that, up to a constant, this
construction is optimal for all trees of $\T_3$ and $\T_4$.

\subsection{A lower bound for $\P_3(n)$.}

\subsubsection{Some elements from additive combinatorics.}\label{comb add}
An integer sequence is a \emph{Sidon set} if the sums (equivalently, the
differences) of all distinct pairs of its elements are distinct. There exists
infinitely many of these sequences. For example, the powers of $2$ are an
infinite Sidon set.
The maximal size of a Sidon set $A \subseteq \{1,2, \ldots, n\}$ is only known
up to a constant \cite{Gowers}. This bound is easily obtained since $A$ being
Sidon set, there are exactly $\frac{|A|(|A|+1)}{2}$ sums of pairs of elements
of $A$ and all their sums are less or equal to $2n$. Thus,

\[\frac{|A|(|A|+1)}{2} \leq 2n\]
and $ |A| \leq 2\sqrt{n}$.
Erd\H{o}s and Turán \cite{ET} showed that for any prime number $p$, the
sequence
\begin{align}  \label{eq:Erdos}
  A_p = (2pk + (k^2\mod{p}))_{k=1,2,\ldots, p-1},
\end{align}
is a Sidon set. The reader should notice that, since there exists arbitrarily
large prime numbers, there is no maximal size for sequences constructed in this
way.  \\

Moreover, the sequence $A_p$ is, up to a constant, the densest possible. Indeed, 
the maximum value of any element of $A_p$ is less than $2p^2$ and $|A_p| =
p-1$. Since a Sidon set in $\{1,2,\ldots,n\}$ is of size at most $2\sqrt{n}$,
the density of $A_p$ is $\sqrt{8}$ (around $2.83$) times smaller, for any large
$p$.

\subsubsection{The hair comb construction.} \label{sssec peigne}
Our goal is to describe a tree having a palindromic language of size substantially larger than the size of the tree. In this section, we build a tree $\mathcal{C}_p \in \T_3$ for any prime $p$ containing a number of palindromes in $\Theta(|\C_p|^\frac{3}{2})$.\\

For each prime number $p$, let $B = (b_1,\dots,b_{p-2})$ be the sequence
defined by $b_i = a_{i+1} - a_i$,  where the values $a_i$ are taken in the sequence $A_p$ presented above, equation \eqref{eq:Erdos}, and let $\C_p$ be the tree constructed as
follows :
\begin{center}
  \begin{tikzpicture}[scale=\tikzScale]
    \begin{scope}[shift={(0,0)}]
      \def\n{9}
      \def\m{7}
      \foreach \x in { 0,...,\n } {
        \def\y{\x+1}
        \node (H-\x) at (2*\x,0) {};
        \node (B-\x) at (2*\x,-2) {};
        \fill (2*\x,0) circle (\tailleCercle);
        \fill (2*\x,-2) circle (\tailleCercle);
        \draw (2*\x,-2) -- node[right]{$1^p$} (2*\x,0);
      }
      \foreach \x in { 1,...,\m } {
        \draw (2*\x-2,0) -- node[above]{$0^{b_\x}$} (2*\x,0);
      }
    \end{scope}
    \draw (14,0) edge [above] node {$\cdots$}(16,0);
    \draw (16,0) edge [above] node {$0^{b_{p-2}}$}  (18,0);
  \end{tikzpicture}
\end{center}

\begin{prop}\label{caract suite B}
  The sums of the terms in each contiguous subsequence of $B$ are pairwise distinct.
\end{prop}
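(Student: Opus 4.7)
The plan is to exploit the telescoping nature of the sequence $B$ together with the Sidon property of $A_p$. A contiguous subsequence of $B$ has the form $(b_i, b_{i+1}, \ldots, b_j)$ for some indices $1 \leq i \leq j \leq p-2$, and since $b_k = a_{k+1} - a_k$ by definition, the sum telescopes:
\begin{equation*}
b_i + b_{i+1} + \cdots + b_j = (a_{i+1}-a_i) + (a_{i+2}-a_{i+1}) + \cdots + (a_{j+1}-a_j) = a_{j+1} - a_i.
\end{equation*}
So every contiguous-subsequence sum of $B$ is a positive difference $a_s - a_r$ with $1 \leq r < s \leq p-1$, and conversely each such difference arises in this way from a unique pair $(r,s) = (i, j+1)$.

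The next step is to show the map $(i,j) \mapsto a_{j+1} - a_i$ is injective. Assume two contiguous subsequences, indexed by $(i,j)$ and $(i',j')$, yield the same sum. Then $a_{j+1} - a_i = a_{j'+1} - a_{i'}$, which rewrites as $a_{j+1} + a_{i'} = a_{j'+1} + a_i$. By the Sidon property of $A_p$ recalled in Section \ref{comb add}, two such equal sums of pairs force $\{a_{j+1}, a_{i'}\} = \{a_{j'+1}, a_i\}$. Since $j+1 > i$ and $j'+1 > i'$, comparing the larger elements of each pair gives $a_{j+1} = a_{j'+1}$ and comparing the smaller ones gives $a_i = a_{i'}$. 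As the elements of $A_p$ are pairwise distinct, this yields $(i,j) = (i',j')$.

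Hence distinct contiguous subsequences of $B$ produce distinct sums, which is exactly the claim. There is no real obstacle here: the Sidon property was introduced precisely to handle this situation, and once one notices that the sum of a block of consecutive $b_k$'s collapses to a single difference $a_{j+1} - a_i$, the result is immediate. The only care needed is in the bookkeeping of index ranges to ensure the differences are genuinely positive so that the Sidon (equivalently, $B_2$) condition applies unambiguously.
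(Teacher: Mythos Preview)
Your proof is correct and follows essentially the same route as the paper: telescope the contiguous sum of the $b_k$'s to a single difference of elements of $A_p$, then invoke the Sidon property to rule out equal differences. Your write-up is in fact more careful with indices than the paper's own argument; the only minor looseness is the phrase ``comparing the larger elements of each pair,'' where it is cleaner to argue directly that from $\{a_{j+1},a_{i'}\}=\{a_{j'+1},a_i\}$ the alternative $a_{j+1}=a_i$ is impossible since $j+1>i$ and $A_p$ is injective.
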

\begin{proof}
  By contradiction, assume that there exists indexes $k,l,m,n$ such
  that $\sum_{i=k}^{l} b_i = \sum_{j=m}^{n} b_j$. By definition of $B$,
  \[\sum_{i=k}^{l} b_i = \sum_{i=k}^{l}( a_i - a_{i-1}) = a_l - a_{k-1}
    \textrm{ and } \sum_{j=m}^{n} b_j = a_n - a_{m-1}.\]
    This implies that $a_l+a_{m-1} = a_n+a_{k-1}$, which is impossible.
  \end{proof}
\begin{lem}\label{pal de cp}
  The number of palindromes in $\C_p$ is in $\Theta(p^3)$.
\end{lem}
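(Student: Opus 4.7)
The plan is to classify all palindromes of $\C_p$ according to their block structure and then count each class.

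First, I would describe the possible path labels in $\C_p$. Since the top spine is labeled entirely with $0$ and every vertical hair entirely with $1$, any simple path in $\C_p$ has a label of exactly one of the following three forms: $0^j$ (both endpoints on the spine); $1^k$ (both endpoints in the same hair); or $1^i 0^j 1^k$ with $i,k,j\geq 1$ (endpoints in two different hairs, crossing the spine). In the third case, $j$ is the distance between the two top nodes where the hairs attach, hence a sum of consecutive terms of $B$, and $1\leq i,k\leq p$. Consequently the palindromes split into three families: the pure $0$-words $0^j$; the pure $1$-words $1^k$ with $0\leq k\leq p$; and the three-block palindromes $1^i 0^j 1^i$ with $i\geq 1$ and $j\geq 1$.

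Next, I would count each family. The top spine is a single $0$-path of length $a_{p-1}-a_1 \in \Theta(p^2)$, so the palindromes of the form $0^j$ contribute $\Theta(p^2)$. Palindromes $1^k$ contribute at most $p+1$. For the main family $1^i 0^j 1^i$, the outer block length recovers $i$ and the inner block length recovers $j$, so each pair $(i,j)$ yields a distinct palindrome. By Proposition~\ref{caract suite B}, the achievable values of $j$ are in bijection with the $\binom{p-1}{2}$ contiguous subsequences of $B$, and $i$ ranges in $\{1,\ldots,p\}$. This gives both an upper bound of $p\cdot\binom{p-1}{2} \in \bigo(p^3)$ palindromes of this form and, once realizability is checked, a matching lower bound.

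Realizability is direct: given $1\leq i\leq p$ and $j = \sum_{l=s}^{t} b_l$, take the path joining the two nodes that lie at depth $i$ in the two hairs attached at the endpoints of the spine segment of length $j$; its label is precisely $1^i 0^j 1^i$, and since each hair has depth $p$ the depth $i$ is always available. Summing the three counts yields $|\Pal(\C_p)| \in \Theta(p^3)$. The only nontrivial ingredient is Proposition~\ref{caract suite B}, which ensures that distinct contiguous subsequences of $B$ contribute distinct middle block lengths $j$; everything else reduces to routine bookkeeping and to the estimates $|B|=p-2$ and $|\C_p|\in\Theta(p^2)$.
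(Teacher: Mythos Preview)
Your proof is correct and follows essentially the same approach as the paper: classify the palindromes by block structure into $0^+$, $1^+$, and $1^+0^+1^+$, then invoke Proposition~\ref{caract suite B} to count the third class as $p\cdot\binom{p-1}{2}=\tfrac{1}{2}p(p-1)(p-2)$, which dominates. One minor slip: your claim that \emph{every} path label in $\C_p$ has one of the three listed forms omits paths of the form $1^i0^j$ (one endpoint in a hair, the other on the spine away from the attachment node), but since such words are never palindromes the subsequent palindrome classification and the final count are unaffected.
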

\begin{proof}
  The  nonempty palindromes of $\C_p$ are of three different forms. Let $c_0$ be the
  number of palindromes of the form $0^+$, $c_1$ be the number of palindromes
  of the form $1^+$ and $c_{101}$ be the number of palindromes of the form
  $1^+0^+1^+$. The number of palindromes of $\C_p$ is clearly $|\pal(\C_p)| =
  c_0 + c_1 + c_{101} + 1$, where one is added for the empty word. 
  \begin{align*}
  c_0 &= b_1 + b_2 + \cdots + b_{p-2} = a_{p-1} - a_1 = 2p^2-4p,\\
    c_1 &= p,\\
    c_{101} &= \left|\{ 1^x0^y1^x \in \pal(\C_p) \} \right| \\
    &= |\{x \mid 1 \leq x \leq p\}| \cdot 
       |\{ y \mid y = \textstyle \sum_{i=k}^{l} b_i \textrm{ for } 1\leq k \leq l \leq p-2 \}|\\ 
       & = {\textstyle\frac{1}{2}}p(p-1)(p-2).
   \end{align*}
   The last equality comes from the fact that there are $(p-1)(p-2)/2$
   possible choices of pairs $(k,l)$ and proposition \ref{caract suite B}
   guarantees that each choice sums up to a different value. The asymptotic
   behavior of the number of palindromes is determined by the leading term $p^3$.
\end{proof}    
\begin{lem}\label{taille de C_p}
  The number of edges in $\C_p$ is in $\Theta(p^2)$.
\end{lem}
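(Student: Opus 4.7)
The plan is to split the edges of $\C_p$ into two natural groups, count each group directly, and verify that both counts are in $\Theta(p^2)$.

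First, I would account for the vertical ``teeth'' of the comb. The top path has $p-1$ nodes (one for each term of the sequence $A_p$), and from each of them hangs a path labeled $1^p$, which contributes $p$ edges. This gives a total of $(p-1)\cdot p = p^2 - p$ vertical edges, already in $\Theta(p^2)$.

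Next, I would count the horizontal edges of the backbone. Between consecutive top nodes there is a path labeled $0^{b_i}$ for $i=1,\dots,p-2$, contributing $b_i$ edges. The total is $\sum_{i=1}^{p-2} b_i$, and by the same telescoping argument already used in the proof of Lemma \ref{pal de cp}, this sum equals $a_{p-1} - a_1$. Using the explicit formula \eqref{eq:Erdos} for $A_p$, together with $(p-1)^2 \equiv 1 \pmod p$, I would compute $a_{p-1}-a_1 = 2p^2-4p$, which is also in $\Theta(p^2)$.

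Summing the two contributions yields $|\C_p| = 3p^2 - 5p \in \Theta(p^2)$. There is no conceptual obstacle here: the only thing to be careful about is to match the indexing of $A_p$ with the definition of $B$ and to avoid double-counting the nodes shared between the backbone and the teeth (nodes carry no edges, so this is automatic).
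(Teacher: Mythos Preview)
Your proof is correct and follows essentially the same approach as the paper: you split the edges into the $p-1$ vertical teeth of length $p$ and the horizontal backbone of length $\sum_{i=1}^{p-2} b_i = a_{p-1}-a_1 = 2p^2-4p$, arriving at $|\C_p| = 3p^2 - 5p$. The only difference is cosmetic---you present the two counts in the opposite order and spell out the evaluation of $a_{p-1}-a_1$ a bit more explicitly.
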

\begin{proof}
  The number of edges labeled by $0$ is $b_1 + b_2 + \ldots + b_{p-2} = 2p^2-4p$.
  For those labeled with $1$, there are exactly $p-1$ sequences of edges
  labeled with $1$'s and they all have length $p$. The total number of
  edges is thus $2p^2-4p+p(p-1) = 3p^2-5p$.
\end{proof}

\begin{thm}
  $\P_3(n) \in \Omega(n^\frac{3}{2})$.
\end{thm}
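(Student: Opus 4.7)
The plan is to combine the two preceding lemmas with a density-of-primes argument. Lemma \ref{pal de cp} and Lemma \ref{taille de C_p} together show that the hair comb family $\C_p$ realizes the ratio $|\Pal(\C_p)| / |\C_p|^{3/2} \in \Theta(1)$: indeed, $|\C_p| = 3p^2-5p$ and $|\Pal(\C_p)| = \frac{1}{2}p^3 + O(p^2)$, so substituting $p \in \Theta(\sqrt{|\C_p|})$ gives $|\Pal(\C_p)| \in \Theta(|\C_p|^{3/2})$. Since each $\C_p$ belongs to $\T_3$ (its factors are of the form $0^*$, $1^*$, $0^*1^*$, $1^*0^*$ or $1^*0^*1^*$), this yields infinitely many points witnessing the desired lower bound, but only at the discrete values $n = |\C_p|$.

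To convert this into a bound valid for every $n$, I would invoke Bertrand's postulate: for each integer $n$ large enough, pick the largest prime $p$ such that $3p^2 - 5p \leq n$. Bertrand's postulate guarantees a prime in the interval $(\lfloor\sqrt{n/6}\rfloor, 2\lfloor\sqrt{n/6}\rfloor)$, and therefore $p \in \Theta(\sqrt{n})$. The tree $\C_p$ then satisfies $|\C_p| \leq n$, so it is an eligible competitor in the definition of $\P_3(n)$, and
\[
  \P_3(n) \geq |\Pal(\C_p)| \geq \tfrac{1}{2}p(p-1)(p-2) \in \Omega(n^{3/2}).
\]

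The only real subtlety is the passage from the discrete sequence $(|\C_p|)_{p \text{ prime}}$ to arbitrary $n$; without a density statement on primes, one would only obtain the lower bound along a subsequence. Bertrand's postulate (or any weaker estimate on prime gaps of the form $p_{k+1} - p_k = o(p_k)$) suffices to close this gap, and is the only ingredient not already supplied by the construction and by Lemmas \ref{pal de cp} and \ref{taille de C_p}. Everything else is bookkeeping on the constants, which can be kept implicit inside the $\Omega$-notation.
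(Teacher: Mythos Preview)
Your proof is correct and follows the same approach as the paper: combine Lemmas~\ref{pal de cp} and~\ref{taille de C_p} to see that $|\Pal(\C_p)| \in \Theta(|\C_p|^{3/2})$ and then invoke the unboundedness of the family $\{\C_p\}$. The paper's version is terser and does not spell out the prime-gap issue you address with Bertrand's postulate; your treatment of that point is a genuine improvement in rigor, since $\P_3$ is only guaranteed to be nondecreasing, and without a bound on $p_{k+1}/p_k$ one would strictly speaking only obtain the lower bound along the subsequence $n=|\C_p|$.
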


\begin{proof}
  Lemmas \ref{pal de cp} and \ref{taille de C_p} implies that the number of
  palindromes in $\C_p$ is in $\Theta(|\C_p|^{\frac{3}{2}})$. Since
  there are infinitely many trees of the form $\C_p$ and since their size
  is not bounded, these trees provide a lower bound on the growth rate of
  $\P_3(n)$.
\end{proof}
\subsection{The value of $\P_4(n)$ is in  $\Theta(n^{\frac{3}{2}})$.}
In this subsection, we show that the asymptotic value of $\P_3(n)$ is reached by the hair comb construction, given above, and that it is the same  value for $\P_4(n)$.
\begin{thm} \label{valeur "max" de r2}
    $\P_4(n) \in \Theta(n^\frac{3}{2})$.
\end{thm}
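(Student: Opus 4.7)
The plan is to handle the lower and upper bounds separately. The lower bound $\P_4(n)\in\Omega(n^{3/2})$ is immediate from $\T_3\subseteq\T_4$ and the previous theorem. For the upper bound, I would first observe that every palindrome in a tree of $\T_4$ has run-length-encoding of length $1$ or $3$: the encoding of a palindrome is symmetric in its block letters, forcing an odd number of blocks, and the $\T_4$ constraint then caps this at $3$. One-block palindromes contribute at most $|T|+1$, so it suffices to bound the three-block palindromes $a^xb^ya^x$ by $\bigo(n^{3/2})$.

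Fixing an unordered letter pair $\{a,b\}$ (constantly many, absorbed into the constant), I would decompose $T$ into maximal monochromatic subtrees. Every palindrome $a^xb^ya^x$ has its middle $b^y$-block lying inside a unique maximal $b$-subtree $S$, with endpoints $u,v\in B(S)$ (nodes of $S$ incident to an $a$-edge) at distance $d_S(u,v)=y$, and outer $a^x$-blocks in the $a$-subtrees attached at $u$ and $v$. Writing $\alpha(u)$ for the longest $a$-path starting at $u$ away from $S$, $A_S=\sum_{u\in B(S)}\alpha(u)$, and $n_S=|S|$, the palindromes contributed by $S$ are bounded by $\sum_{x\geq 1}|D_x(S)|$, where $D_x(S)=\{d_S(u,v):u,v\in B_x(S),\,u\neq v\}$ and $B_x(S)=\{u\in B(S):\alpha(u)\geq x\}$.

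The heart of the argument is the local bound $\sum_{x\geq 1}|D_x(S)|\leq C\sqrt{n_S}\,A_S$, obtained from $|D_x(S)|\leq\min\bigl(\binom{|B_x|}{2},n_S\bigr)$ by splitting the summation at the threshold $|B_x|=\sqrt{2n_S}$ and using $\sum_x|B_x|=A_S$. Summing over $b$-subtrees gives $\sum_S A_S\sqrt{n_S}\leq\sqrt{n_b}\sum_S A_S$, and the structural rigidity of $\T_4$ then enters: the skeleton of maximal monochromatic subtrees has diameter at most $3$, so it is a star or a double-star, and a short case analysis yields $\sum_{S:|B(S)|\geq 2}A_S\leq n_a$. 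Consequently the total count of palindromes of the form $a^xb^ya^x$ is at most $Cn_a\sqrt{n_b}=\bigo(n^{3/2})$, and the symmetric bound for $b^xa^yb^x$ palindromes completes the proof.

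The main obstacle is the local estimate itself, which must be tight since the hair comb of the previous subsection saturates it. The naive inequality $|D_x|\leq\binom{|B_x|}{2}$ on its own yields only $\bigo(n^2)$; both the Sidon-flavoured threshold interpolation and the case analysis on the skeleton shape (establishing $\sum_{S:|B(S)|\geq 2}A_S\leq n_a$) are essential to obtain the correct exponent.
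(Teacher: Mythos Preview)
Your plan is correct and leads to the same $\Theta(n^{3/2})$ bound, but the route is genuinely different from the paper's.

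The paper does not work directly with an arbitrary $T\in\T_4$. Instead it first replaces $T$ by a binary tree $S\in\T_3$ with a comparable palindrome count (Lemma~\ref{reduction a Q}), then uses Lemmas~\ref{101 pas 010}--\ref{1-1} to normalise $S$ into a comb: a connected $0$-core with threadlike $1$-branches $b_1,\dots,b_m$ attached. Only after this reduction does the counting happen, via the two crude bounds $c_{101}\le\sum_{i<j}\min(|b_i|,|b_j|)$ and $c_{101}\le\sum_{l}\max_{\pi(b_i,b_j)=0^l}\min(|b_i|,|b_j|)$, combined by thresholding on $|b_i|\ge\sqrt{n}$.

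Your approach skips the four preparatory lemmas entirely and handles the structure inside the count. You partition three-block palindromes by the maximal $b$-subtree $S$ carrying the middle block, prove the local estimate $\sum_x|D_x(S)|\le C\sqrt{n_S}\,A_S$ by thresholding on $|B_x|$ rather than on branch lengths, and then invoke the $\T_4$ constraint only once, through the observation that the skeleton of monochromatic $\{a,b\}$-subtrees has diameter at most $3$, hence at most one $b$-subtree has $|B(S)|\ge 2$, giving $\sum_S A_S\le n_a$. This is cleaner in that it avoids modifying the tree and treats $\T_4$ and $\T_3$ uniformly; the paper's reduction, on the other hand, makes the final object so concrete that the count is completely transparent. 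Both arguments rest on the same $\sqrt{n}$ interpolation idea, just parameterised differently.

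One small point to make explicit when you write it out: for $|\Sigma|>2$ the subgraph of $a$- and $b$-edges need not be connected, so the skeleton and the bound $\sum_{|B(S)|\ge2}A_S\le n_a$ should be argued componentwise; this is routine since the component sums telescope to $n_a$ and $n_b$.
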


Before giving a proof of this theorem, we need to explain some arguments.
We first justify why we reduce any tree of $\T_4$ to a tree in $\T_3$. Then, we present some properties of the latter trees in order to establish an upper bound on $\P_4(n)$.

\begin{lem}\label{reduction a Q}
    For any $T \in \T_4$, there exists a tree $S \in \T_3$ on a binary alphabet satisfying $|S| \leq
    |T|$, and with \mbox{$\frac{1}{|\Sigma|^2}|\Pal(T)|- |T| \leq |\Pal(S)| \leq |\Pal(T)|$.}
\end{lem}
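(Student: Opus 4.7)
The plan rests on two structural observations followed by a selection and pruning step.

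First, I would prove the combinatorial fact that every palindrome $w$ in a tree of $\T_4$ satisfies $|\Delta(w)|\in\{0,1,3\}$: a palindrome $x_1^{n_1}x_2^{n_2}x_3^{n_3}x_4^{n_4}$ with distinct adjacent $x_i$'s would, by the palindromic condition, force $x_2=x_3$, contradicting adjacency, and the $|\Delta|=2$ case is excluded the same way. Consequently every element of $\Pal(T)$ uses at most two distinct letters, so every palindrome is already a $\T_3$-word.

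Second, I would partition $\Pal(T)$ by the alphabet it uses. For each unordered pair $\{a,b\}\subseteq\Sigma$ (allowing $a=b$), set $Q_{a,b}=\Pal(T)\cap\{a,b\}^*$, so that $\Pal(T)=\bigcup_{\{a,b\}}Q_{a,b}$. Each two-letter palindrome belongs to exactly one such $Q_{a,b}$, hence $\sum_{\{a,b\}}|Q_{a,b}|\geq|\Pal(T)|$, and since the number of pairs is at most $\binom{|\Sigma|+1}{2}\leq|\Sigma|^2$, a pigeonhole argument yields one pair with $|Q_{a,b}|\geq|\Pal(T)|/|\Sigma|^2$. I fix this optimal pair.

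Third, to construct $S$ I would consider the sub-forest $F$ of $T$ formed by the edges labelled $a$ or $b$. Every word of $Q_{a,b}$ is realized by a path wholly inside a single connected component of $F$, and each such component is a binary subtree of $T$, hence lies in $\T_4$ with at most $|T|$ edges. I would take as $S$ one such component and, if necessary, prune edges that close a $4$-block factor in order to guarantee $S\in\T_3$. The resulting $S$ is binary, has at most $|T|$ edges, and satisfies $\lang(S)\subseteq\lang(T)$, which yields the right-hand inequality $|\Pal(S)|\leq|\Pal(T)|$ for free.

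The main obstacle is the lower bound: showing that after this selection and pruning $|\Pal(S)|\geq|Q_{a,b}|-|T|$. The intended accounting amortises the loss over the cuts performed: each deleted edge kills only the few palindromes whose centre straddles that edge inside a $4$-block context, and because $Q_{a,b}$ is restricted to the highly constrained shapes $a^i,\,b^j,\,a^ib^ja^i,\,b^ja^ib^j$, the number of palindromes lost per cut can be charged to the deleted edge itself, making the total loss bounded by the edge count $|T|$. Combining this with the pigeonhole lower bound on $|Q_{a,b}|$ gives the required inequality $|\Pal(S)|\geq|\Pal(T)|/|\Sigma|^2-|T|$.
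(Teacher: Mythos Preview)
Your high-level strategy---observing that palindromes in $\T_4$ have at most three blocks, pigeonholing on the pair of letters, and passing to a binary subtree---is the same as the paper's. The gap is in how you build $S$ and justify the lower bound. You take the full $\{a,b\}$-subforest $F$, pick ``one such component,'' and then prune edges to force $\T_3$. Two steps here are unjustified. First, you never argue that a single component of $F$ already carries essentially all of $Q_{a,b}$; palindromes of shape $a^ib^ja^i$ realised in other components are simply discarded, and nothing you wrote bounds that loss. Second, the pruning-and-charging claim (``each deleted edge kills only the few palindromes whose centre straddles that edge'') is asserted, not proved: removing an edge $e$ kills every palindrome whose realising path traverses $e$, and a single edge may lie on many distinct $a^ib^ja^i$ paths. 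You give neither a rule for which edges to prune nor a reason the total loss is $O(|T|)$ rather than (number of cuts)$\times\Theta(|T|)$.

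The paper sidesteps both difficulties by constructing $S$ differently: rather than taking all $\{a,b\}$-edges and cutting down, it takes $S$ to be the union of exactly those paths $p(u,v)$ with $\pi(u,v)\in\Pal(a^+b^+a^+)$. This single definition delivers everything at once. Connectedness of $S$ is forced by $T\in\T_4$: two components each containing an $a^+b^+a^+$ path would be joined in $T$ by a word that the four-block bound squeezes into $b^*$, hence already in $S$. Membership in $\T_3$ is automatic because every leaf of $S$ touches an $a$-edge, so any factor $a^+b^+a^+b^+$ in $S$ would extend to five blocks in $T$. And the lower bound requires no charging at all: by construction $S$ contains every $a^+b^+a^+$ palindrome of $T$, so only single-letter palindromes---at most $|T|$ of them---can be missing. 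Your route could perhaps be completed, but the missing arguments are precisely the substance of the lemma; as written, the lower bound is a genuine gap.
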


\begin{proof}
    If there is in $T$ no factor with three blocks starting and ending with the same letter, this means that all the palindromes are repetitions of a single letter. We then denote by $a$ the letter on which the longest palindrome is constructed. It might not be unique, but it does not matter. Let $S$ be the longest path labeled only with $a$'s. Then, $ |\Pal(T)|\leq |\Sigma||\Pal(S)|\leq |\Sigma||\Pal(T)|$.\\
    Otherwise, let $a$ and $b$ be letters of $\Sigma$ and let $(a,b)$ be a pair of letters for which $|\lang(T) \cap \Pal(a^+b^+a^+)|$ is maximal. We define the set
    \[E_S=\cup \big(\ p(u,v) \mid \pi(u,v) \in \Pal(a^+b^+a^+)\ \big)\]
    and let $S$ be the subgraph of $T$ containing exactly the edges of $E_S$ and the nodes connected to these edges. 
    Then, there are three things to prove :
    \begin{itemize}
        \item $S$ is a tree: Since $S$ is a subgraph of $T$, it cannot contain any cycle. We however need to prove that $S$ is connected. To do so, assume that $S$ has two connected components named $C_1$ and $C_2$. Of course, $\lang(C_1) \subseteq a^*b^*a^*$ and $C_1$ has at least one factor in $a^+b^+a^+$. The same holds for $C_2$.
        Since $T$ is a tree, there is a unique path in $T\backslash S$ connecting $C_1$ and $C_2$. We call it $q$.\\
        There are  paths in $C_1$ and in $C_2$ starting from an extremity of
        $q$ and containing factors in $b^+a^+$. Thus, by stating that $w$ is
        the trace of $q$, $T$ has a factor $f \in a^+b^+a^*wa^*b^+a^+$. By
        hypothesis, $T \in \T_4$ so any factor of $T$ contains at most four
        blocks. Then, $f$ has to be in $a^+b^+wb^+a^+$, with $w \in b^*$ and so $q$ is a
        path in $S$. A contradiction. \\      
        \begin{center}
            \begin{tikzpicture}
                \begin{scope}[shift={(0,0)}]
                \foreach \x in { 0,...,3 } {
                    \def\y{\x+1}
                    \node (H-\x) at (1.2*\x,0) {};
                    \fill (1.2*\x,0) circle (\tailleCercle);
                }
                \foreach \x in { 5,...,8 } {
                    \def\y{\x+1}
                    \node (H-\x) at (1.2*\x,0) {};
                    \fill (1.2*\x,0) circle (\tailleCercle);
                }
                \end{scope}
                \draw (0,0) edge [above] node {$a^+$}  (1.2,0);
                \draw (1.2,0) edge [above] node {$b^+$}  (2.4,0);
                \draw (2.4,0) edge [above] node {$a^+$}  (3.6,0);
                \node (C1) at (1.8,0) {};
                \node (C2) at (7.8,0) {};
               
                \draw (6,0) edge [above] node {$a^+$}  (7.2,0);
                \draw (7.2,0) edge [above] node {$b^+$}  (8.4,0); 
                \draw (1.8,0) ellipse (2.4cm and 0.5cm) node [above of=C1] {$C_1$};
                \draw (7.8,0) ellipse (2.4cm and 0.5cm) node [above of=C2] {$C_2$};
                \draw (8.4,0) edge [above] node {$a^+$}  (9.6,0);
                \path[->] 
                (1.6,0) edge [dashed, bend right=15] node [above]{$q$} (8.4,0);
            \end{tikzpicture}
        \end{center}
        \item $S \in \T_3$ is on a binary alphabet: 
        By construction, $S$ contains only edges labeled by $a$ or $b$ and has no leaf connected to an edge labeled by $b$. This implies that if $S$ contains a factor $f \in a^+b^+a^+b^+$, $f$ may be extended to $f' \in a^+b^+a^+b^+a^+$, which does not appear in $T$.
        \item $|\Pal(S)| \geq \frac{1}{|\Sigma|^2}|\Pal(T)|-|T|$:  We chose $(a,b)$ to be the pair of letters for which the number of palindromes on an alphabet of size at least $2$ was maximal. The number of palindromes on a single letter is at most $|T|$. Thus, 
        \[\frac{1}{|\Sigma|^2}|\Pal(T)|-|T| \leq |\Pal(S)| \leq |\Pal(T)| .\]
    \end{itemize}
\end{proof}

\begin{lem}\label{101 pas 010}
    For any $T \in \T_3$, $T$ cannot contain both factors of $0^+1^+0^+$ and of $1^+0^+1^+$.
\end{lem}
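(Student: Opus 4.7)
Suppose for contradiction that $T \in \T_3$ contains both a factor $P_1 = \pi(x_1,y_1)$ with label $0^a 1^b 0^c$ and a factor $P_2 = \pi(x_2,y_2)$ with label $1^d 0^e 1^f$, all exponents positive. Let $u_1, v_1$ denote the transition nodes of $P_1$, so that $\pi(u_1,v_1)$ is entirely $1$-labeled of length $b$, and let $u_2, v_2$ denote those of $P_2$, so that $\pi(u_2,v_2)$ is entirely $0$-labeled of length $e$. The plan is to exhibit two vertices of $T$ whose connecting path has label with four alternating blocks of $0$'s and $1$'s, thereby contradicting $T \in \T_3$.

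Since the sub-paths $\pi(u_1,v_1)$ and $\pi(u_2,v_2)$ use disjoint edge-alphabets, they share no edge; as two paths in a tree, their intersection is a connected sub-path containing no edge, hence either empty or a single vertex. In the shared-vertex case, let $w$ be the common vertex. Then $w$ is incident both to a $1$-edge of $\pi(u_1,v_1)$ and to a $0$-edge of $\pi(u_2,v_2)$. I would choose an endpoint $\alpha \in \{x_1,y_1\}$ of $P_1$ whose transition node differs from $w$ (for instance, if $w = u_1$ take $\alpha = y_1$), and similarly $\beta \in \{x_2,y_2\}$. The unique path $\pi(\alpha,\beta)$ then concatenates the outer $0$-block of $P_1$, a nonempty $1$-segment of $\pi(u_1,v_1)$ reaching $w$, a nonempty $0$-segment of $\pi(u_2,v_2)$ leaving $w$, and the outer $1$-block of $P_2$. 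This yields a simple path in $T$ with label of the form $0^+ 1^+ 0^+ 1^+$, a four-block factor.

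In the vertex-disjoint case, let $R = \pi(q_1,q_2)$ be the unique simple path in $T$ connecting $\pi(u_1,v_1)$ to $\pi(u_2,v_2)$, meeting them at $q_1$ and $q_2$ respectively, with the interior of $R$ avoiding both sub-paths. Choosing $\alpha, \beta$ analogously, the path $\pi(\alpha,\beta)$ takes the form $0^c \cdot 1^{\beta'} \cdot \pi(q_1,q_2) \cdot 0^{\delta'} \cdot 1^f$ (up to symmetric relabeling of endpoints), with $\beta', \delta'$ positive by the endpoint choice. Whatever the label of $\pi(q_1,q_2)$ (itself a word of at most $3$ blocks, since $T \in \T_3$), the label of $\pi(\alpha,\beta)$ still contains at least four alternating blocks: the outer $0^c$ and $1^f$ are always present, and the middle necessarily has a $1$-run (arriving at $q_1$) followed eventually by a $0$-run (leaving $q_2$), even after absorbing any same-label merges at the junctions.

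The main obstacle is the case-by-case bookkeeping. One must verify that the endpoint pair $(\alpha,\beta)$ can always be chosen so that all four outer and inner blocks of the constructed path are nonempty, which requires care when $w$ (or $q_1,q_2$) coincides with one of the $u_i, v_i$; and one must confirm that the concatenated walk is a simple path in $T$ and that merging of same-labeled adjacent segments at the junctions does not collapse the alternation below four blocks. The assumption $T \in \T_3$ restricts the possible shapes of $\pi(q_1,q_2)$ in the disjoint case to at most three blocks, which makes the merging analysis finite and tractable.
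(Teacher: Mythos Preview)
Your proposal is correct and follows essentially the same approach as the paper: assume both factor types are present and produce a path between a suitably chosen endpoint of each that has at least four blocks, contradicting $T\in\T_3$. The paper's proof is much terser---it simply asserts that some path from $w\in\{u,v\}$ to $w'\in\{x,y\}$ contains a factor in $0^+1^+0^+\Sigma^*1^+$---whereas you spell out the shared-vertex versus vertex-disjoint cases and the endpoint selection that make this assertion work.
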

\begin{proof}
    We proceed by contradiction. Assume that there exists in $T$ four nodes $u,v,x,y$ such that $\pi(u,v)\in 0^+1^+0^+$ and $\pi(x,y) \in 1^+0^+1^+$. Since $T$ is a tree, there exists a unique path between two nodes. In particular, there is a path from $w \in \{u,v\}$ to  $w' \in \{x,y\}$ containing a factor of the form $ 0^+1^+0^+\Sigma^*1^+$, which contradicts the hypothesis that $T \in \T_3$. 
\end{proof}

    We now define the restriction $\R_a(T)$ of a tree $T$ to the letter $a$ by keeping from $T$ only the edges labeled by $a$ and the nodes connected to them.
\begin{lem}\label{arbre a connexe}
    Let $T$ be in $\T_3$. There exists at least one letter $a \in \Sigma$ such that $\R_a(T)$ is connected.
\end{lem}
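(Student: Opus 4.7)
The plan is to argue by contrapositive: I will show that if $\R_a(T)$ is disconnected for every letter $a$ that labels at least one edge of $T$, then $T$ cannot belong to $\T_3$. The first step is the observation that, for $T \in \T_3$, the subgraph $\R_a(T)$ is disconnected if and only if $T$ contains a factor of the form $a^+ b^+ a^+$ for some $b \in \Sigma \setminus \{a\}$. Indeed, if $e_1, e_2$ are $a$-edges in distinct components of $\R_a(T)$ whose endpoints minimise the distance in $T$, then the interior of the path joining them uses only non-$a$ edges, and the three-block constraint forces this interior to be a single block $b^+$. The converse is immediate.

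Under the contradiction hypothesis I will select, for each letter $a$, some $f(a) \neq a$ with $a^+ f(a)^+ a^+$ a factor of $T$; setting $b = f(a)$ and $c = f(b)$, both $a^+ b^+ a^+$ and $b^+ c^+ b^+$ are then factors. If $c = a$ the proof of Lemma \ref{101 pas 010} applies verbatim, since its argument never uses that the two letters are $0$ and $1$, and this yields a direct contradiction. The remaining case is $c \neq a$, where $a,b,c$ are pairwise distinct, and the task becomes to rule out that $T \in \T_3$ contains both $a^+ b^+ a^+$ and $b^+ c^+ b^+$.

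To rule out this last possibility, I will reduce to the shortest realisations $aba$ and $bcb$ and split on how the two realising paths $P_1$ (of trace $aba$) and $P_2$ (of trace $bcb$) sit inside $T$: either they are vertex-disjoint and joined by a non-trivial connecting path $Q$, or they share a single vertex but no edge, or they share the unique $b$-edge common to the two traces. In every configuration I will choose endpoints $p_1 \in \{u_1, v_1\}$ of $P_1$ and $p_2 \in \{u_2, v_2\}$ of $P_2$ so that the trace of $\pi(p_1, p_2)$ is forced to contain the subword $abcb$ or $bcba$, producing a factor of $T$ with four blocks and contradicting $T \in \T_3$. This is in the spirit of the proof of Lemma \ref{101 pas 010}, where a single path whose trace unavoidably opens an extra block is exhibited.

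The main obstacle is precisely this last step, the case analysis. Each individual sub-case is routine --- one simply computes the trace of a specific path of $T$ and reads off four blocks --- but there are enough configurations, and in the disjoint case enough positions for the attachment points of $Q$ on $P_1$ and $P_2$, that a careful bookkeeping is unavoidable. The cleanest way to package the argument is to establish that whenever both $P_1$ and $P_2$ sit inside $T$, at least one of the four cross paths between $\{u_1, v_1\}$ and $\{u_2, v_2\}$ contains a subword of the form $a^+ b^+ a^+\, \Sigma^*\, c^+$ or a symmetric variant, which always has more than three blocks.
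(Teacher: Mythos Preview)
Your proof is correct in outline, and the case analysis in the $c\neq a$ situation can indeed be completed as you indicate, but the route is substantially longer than the one the paper takes. The paper does not argue by contrapositive over all letters, nor does it introduce the map $f$ or split on whether $c=a$. Instead, it proceeds directly: pick any three-block factor $b^+a^+b^+$ of $T$ (if none exists the result is immediate) and show that the \emph{middle} letter $a$ already satisfies the conclusion. If some $a$-edge lay in a different component of $\R_a(T)$ than the central $a^+$-block of this factor, the path from that edge to the block would contain a non-$a$ letter, and extending it into one of the flanking $b^+$-blocks yields a factor of the form $a\,w\,a^+b^+$ with at least four blocks --- a one-line contradiction.

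Notice that your own ``observation'' (that $\R_a$ is disconnected iff $a$ appears as the \emph{outer} letter of some three-block factor) is essentially dual to the paper's step; what the paper exploits is that once you already hold a three-block factor in hand, you should look at its middle letter rather than iterate $f$ on its outer letter. This bypasses the entire $c=a$ versus $c\neq a$ split and the geometric case analysis on how $P_1$ and $P_2$ overlap. Your argument buys nothing extra in exchange for the added length, so the paper's version is strictly preferable here.
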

\begin{proof}
    If $T$ does not contain a factor on at least two letters that starts and
    ends with the same letter, that is of the form $b^+a^+b^+$, then $\R_a(T)$
    is connected for any letter
    $a$.\\
    Otherwise, assume that a factor $f \in b^+a^+b^+$ appears in $T$. Then,
    $\R_a(T)$ must be connected. By contradiction, suppose there exists an edge
    labeled with $a$ that is connected to the sequence of $a$'s in
    $f$, by a word $w$ that contains another letter than $a$.
    Then, there exists a word of the form $awa^+b^+$ in $\lang(T)$ and this contradicts
    the hypothesis that $T\in \T_3$.
\end{proof}\\
Given a node $u$ in a tree, we say that $u$ is a \emph{splitting on the letter
$a$} if $\deg(u) \geq 3$ and there is at least two edges labeled with $a$
connected to $u$.
\begin{lem}\label{1-1}
    Let $T$ be in $\T_3$.
    Then, there is a tree $T'$ of size $|T|$ such that \mbox{$\lang(T) \subseteq \lang(T')$} and there exists a letter $a \in \Sigma$ such that any splitting of $T'$ is on the letter $a$.
\end{lem}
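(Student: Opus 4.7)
The plan is to apply Lemma~\ref{arbre a connexe} to fix a letter $a \in \Sigma$ for which $P := \R_a(T)$ is connected, and then to iteratively rewrite $T$ until every remaining splitting sits on this letter $a$. Each rewriting step would preserve $|T|$ and would not shrink $\lang(T)$, so after finitely many steps we would reach the desired $T'$, with $a$ as the witness letter in the lemma.

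I would start by analysing the structure of $T$ relative to $P$. Since $P$ is a connected subtree of the tree $T$, removing the $a$-edges decomposes $T$ into disjoint branches $B_1,\dots,B_m$, each attached to $P$ at a single node $u_j \in P$. Using $T \in \T_3$ and the fact that a path from deep inside $P$ across $u_j$ into $B_j$ reads $a^{\ell} \cdot w$ with at most three blocks, one gets that $w$ has at most two blocks. Hence every path from $u_j$ inside $B_j$ has the shape $b^{\ast} c^{\ast}$ for letters $b, c \neq a$, and each branch has the constrained shape of a \emph{$b$-trunk with $c$-horns}.

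The local step is then: at a splitting $u$ of $T$ on some letter $b \neq a$ with $b$-edges $e_1,\dots,e_k$ ($k \geq 2$), I would straighten these into one $b$-path passing through $u$ but no longer branching there, by concatenating the sub-trees previously reached via $e_1$ and $e_2$. The remaining edges incident to $u$---which by the structural analysis can only be $a$-edges, or horn edges in a restricted set of non-$a$, non-$b$ letters---are migrated to the free endpoint of the new $b$-path, without creating a non-$a$ splitting there. After the step, $u$ carries at most one $b$-edge, so it is no longer a splitting on $b$.

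The main obstacle is verifying that $\lang(T) \subseteq \lang(T')$ is preserved at each rewriting. I would check three kinds of factors: those internal to a branch (subwords of the elongated $b$-trunk plus the relocated horns), those crossing $u$ between two branches (realized along the concatenated $b$-trunk, whose length equals the sum of the two original $b$-chains), and those extending from $P$ through $u$ into a branch (preserved because the first letter leaving $u$ is still $b$). Iterating this step removes every non-$a$ splitting, and the tree left standing is the required $T'$.
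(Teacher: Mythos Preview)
Your overall plan---fix a letter $a$ with $\R_a(T)$ connected and then iteratively eliminate splittings on every other letter---is exactly the paper's strategy. The difference lies in how tightly you pin down the local structure at a bad splitting, and this is where your proposal runs into trouble.

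You argue that paths from an attachment point $u_j$ into a branch $B_j$ have at most two blocks, and therefore allow each branch to be a ``$b$-trunk with $c$-horns''. The paper exploits something stronger. If one chooses $a$ to be the \emph{middle} letter of a $3$-block factor (which is precisely the letter produced in the proof of Lemma~\ref{arbre a connexe} when $T\notin\T_2$), then every path from $u_j$ into $B_j$ is in fact a \emph{single} block. Hence, at any splitting $v$ on a letter $b\neq a$, at least two of the $b$-edges at $v$ begin pure-$b$ paths that terminate at leaves $x,y$ of $T$. The rewriting step is then elementary: detach the shorter path $\pi(v,y)=b^j$ and reattach it at the leaf $x$ of the longer path $\pi(v,x)=b^i$. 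Language containment is immediate, since every word using $\pi(v,y)$ as a suffix was already realised using $\pi(v,x)$, and every word using both paths is some $b^k$ with $k\le i+j$, which the concatenated path supplies.

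Your transformation, by contrast, concatenates entire subtrees and then ``migrates the remaining edges incident to $u$''---including the $a$-edges---to the free endpoint of the new $b$-path. This step is the gap. Moving the $a$-edges away from $u$ severs the branch from $P$ at its original attachment point; factors of the form $a^+ b^{\ell}$ that used to pass through $u$ no longer have an obvious witness, and you do not verify that they survive. It is also unclear how you handle $k\ge 3$ simultaneous $b$-edges at $u$, or why concatenating two genuine subtrees (rather than two bare paths) produces a tree in which every old factor still embeds. Once you tighten the structural analysis to ``single block'' rather than ``two blocks'', all of these complications evaporate: nothing needs migrating, and the two objects being glued are simply paths to leaves.
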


\begin{proof}
  If $T$ is in $\T_2$, we apply the upcoming transformation to every branches.
  Otherwise, assume that a factor of the form $b^+a^+c^+$ appears in $T$ (note that $b$ might be equal to $c$). We allow splittings only on the letter $a$.
  Let $v$ be a node of $T$ that is a splitting on $b \in \Sigma \backslash \{a\}$ (if it does not exist, then $T' = T$). By the hypothesis on $T$, this means that there exists, starting from $v$, at least two paths labeled only with $b$'s leading to leaves $x$ and $y$.
  \begin{center}
    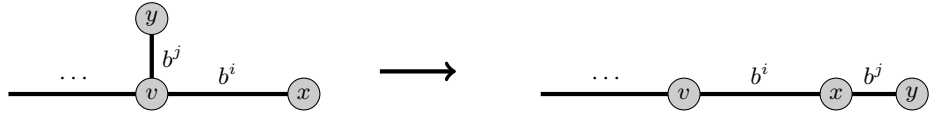
\begin{figure}
      \begin{minipage}{0.4\linewidth}
        \begin{tikzpicture}
          \begin{scope}
            \node (v) at (0,0) {};
            \node (x) at (2,0) {};
            \node (y) at (0,1) {};
            \node (A) at (-2,0) {};
            \draw[ultra thick] (v) edge node[above]{$b^i$}(x);
            \draw[ultra thick] (v) edge node[right]{$b^j$}(y);
            \draw[ultra thick] (v) edge node[above]{$\cdots$}(A);
            \draw[fill=gray!40] (v) circle (\tailleNoeud) node {$v$};
            \draw[fill=gray!40] (x) circle (\tailleNoeud) node {$x$};
            \draw[fill=gray!40] (y) circle (\tailleNoeud) node {$y$};
            \draw[ultra thick,->] (3,0.3) -- ++(1,0);
          \end{scope}
          \begin{scope}[shift={(7,0)}]
            \node (v) at (0,0) {};
            \node (x) at (2,0) {};
            \node (y) at (3,0) {};
            \node (A) at (-2,0) {};
            \draw[ultra thick] (v) edge node[above]{$b^i$}(x);
            \draw[ultra thick] (x) edge node[above]{$b^j$}(y);
            \draw[ultra thick] (v) edge node[above]{$\cdots$}(A);
            \draw[fill=gray!40] (v) circle (\tailleNoeud) node {$v$};
            \draw[fill=gray!40] (x) circle (\tailleNoeud) node {$x$};
            \draw[fill=gray!40] (y) circle (\tailleNoeud) node {$y$};
          \end{scope}
        \end{tikzpicture}
      \end{minipage}
      \caption{The destruction of a splitting on the letter $b$.}
    \end{figure}
  \end{center}
    We assume that $|\pi(v, x)| \geq |\pi(v,y)|$. Then, the words having $\pi(v,y)$ as suffix are a subset of those for which $\pi(v,x)$ is suffix. Therefore, the only case where $\pi(v,y)$ may contribute to the language of $T$ is when both the edges of $\pi(v,x)$ and $\pi(v,y)$ are used. The words of this form are composed only of $b$'s and are of length at most $|\pi(v, x)| + |\pi(v,y)|$.
  Moving the edges between $s$ and $y$ to the other extremity of $x$, we construct a tree for which the language contains $\lang(T)$ and having the same number of nodes.
  Finally, we can apply this procedure until the only remaining splittings are on the letter $a$. This leads to $T'$.
\end{proof}

We are now ready to prove the main theorem.
\begin{proof} [Theorem \ref{valeur "max" de r2}: $\P_4(n) \in \Theta(n^\frac{3}{2})$.] 
  Let $T$ be in $\T_4$.
  By assumption, each factor of $T$ contains at most four blocks of distinct letters. 

  \paragraph{1.~}Let $S \in \T_3$ be such that $|S|\leq |T|$, $\lang(S) \subseteq \{0,1\}^*$ and \mbox{ $ \frac{|\Pal(T)|-|T|}{|\Sigma|^2} \leq |\Pal(S)| \leq |\Pal(T)|$}. Using lemma \ref{reduction a Q}, we know that this exists.\\
    We know by lemma \ref{101 pas 010} that $S$ may contain factors in $1^{+}0^{+}1^{+}$, but not in $0^{+}1^{+}0^{+}$.
    \paragraph{2.~}By lemma \ref{1-1}, there exists a tree $S'$ with $|S'|=|S|$, such that $\lang(S)\subseteq \lang(S')$, and with no splitting on the letter $1$.
    \paragraph{3.~}Finally, we count the palindromes in $S'$. The form of these
    palindromes is either $0^+$, $1^+$ or $1^+0^+1^+$. For the palindromes on
    a one-letter alphabet, their number is bounded by $n$, where $n$ is the
    size of $S'$. We now focus on the number of palindromes of the form
    $1^+0^+1^+$. Call $c_{101}$ this number. We show that \mbox{ $c_{101} \leq 2n\sqrt{n}$}.

      Since $S'$ does not admit any splitting on the letter $1$, each 
      connected component of $R_1(S')$ is a threadlike branch going from a leaf
      of $S'$ to a node of $R_0(S')$. We name these connected components $b_1,
      \ldots, b_m$ and by lemma \ref{arbre a connexe}, we know that $R_0(S')$ is connected.

    Let $b_i$ and $b_j$ be two distinct branches of $S'$. By abuse of notation,
    we note $\pi(b_i,b_j)$ the word defined by the unique path from $b_i$ to
    $b_j$. Let $l$ be such that $\pi(b_i,b_j) = 0^l$ and suppose that $|b_i|
    \leq |b_j|$. Then, for any node $u$ in $b_i$, there exists a unique node
    $v$ in $b_j$, such that the word $\pi(u,v) = 1^k 0^l 1^k$ is a palindrome.
    Moreover, if $|b_i| < |b_j|$, then there are nodes in $b_j$ that cannot
    be paired to a node of $b_i$ in order to form a palindrome. From this
    observation, a first upper bound is:
    \begin{equation}\label{eq_borne_1}
      c_{101} \leq \sum_{1\leq i < j \leq m} \min( |b_i|, |b_j| ).
    \end{equation}
    Another way to bound $c_{101}$ is to count the palindromes of the form
    $1^+0^+1^+$ according to the length of the block of $0$'s. For each length $l$ from $1$
    to $n$, there might be more than one pair $\{b_i,b_j\}$ that produces
    palindromes with central factor $0^l$. This provides a second upper
    bound: 
    \begin{equation}\label{eq_borne_2}
      c_{101} \leq \sum_{l=1}^n \max_{\begin{array}{c}\scriptstyle 1\leq i< j\leq m \\
      \scriptstyle \pi(b_i,b_j) = 0^l\end{array}} \left( \min(|b_i|,|b_j|) \right)
    \end{equation}

    In order to obtain the desired bound on $c_{101}$ we combine these
    two bounds. Let $ B' = \{ i \mid |b_i| \geq \sqrt{n} \}$.
    Since $n$ is the size of $S'$, we have that $|B'| \leq \sqrt{n}$ and
    that the average size of the branches $b_i$ is such that $i \in B'$ is bounded
    by $n/|B'|$. By applying the bound from $(\ref{eq_borne_1})$ to the
    palindromes formed by two branches in $B'$, we obtain that the number of
    such palindromes is:
    \begin{equation}\label{eq_count_pal_1}
      \sum_{\begin{array}{c}\scriptstyle 1\leq i<j \leq m \\ \scriptstyle \{i,j\} \subseteq B'\end{array}} 
      \min( |b_i|,|b_j| ) \leq \frac{|B'|(|B'|-1)}{2} \frac{n}{|B'|} \leq n\sqrt{n}.
    \end{equation}

    Finally, it remains to count the number of palindromes that are defined by
    pairs of branches $\{b_i, b_j\}$ such that $i$ or $j$ is not in $B'$. In such
    case, we always find that $\min(|b_i|,|b_j|) < \sqrt{n}$. The number of
    such palindromes is:
    \begin{equation}\label{eq_count_pal_2}
      \sum_{l=1}^n \max_{\begin{array}{c}\scriptstyle 1 \leq i< j \leq m\\\scriptstyle \pi(b_i,b_j) = 0^l\\ \scriptstyle
        \{i,j\} \not\subset B' \end{array}} 
      \left( \min(|b_i|,|b_j|) \right)
      < n \sqrt{n}.
    \end{equation}
    Since each palindrome in $S'$ is counted by equation
    $(\ref{eq_count_pal_1})$ or $(\ref{eq_count_pal_2})$,
    we obtain, summing both, $c_{101} < 2n\sqrt{n} = 2|S'|^\frac{3}{2}$.
    We deduce that, for any tree $T$ in $\T_4$, the number of palindromes is bounded by
    \[|\Pal(T)| \leq |\Sigma|^2|\Pal(S)|+|T| <  2|\Sigma|^2|S'|^{\frac{3}{2}}+|T| \leq 2|\Sigma|^2|T|^{\frac{3}{2}}+|T|.\]
    Using the fact that the alphabet is fixed (so its size is given by a constant), it is enough to prove that $\P_4(n) \in \bigo(n^{\frac{3}{2}})$. Combining this result with the one given in section \ref{sssec peigne}, one may assert that both $\P_3(n)$ and  $\P_4(n)$ are in $\Theta(n^{\frac{3}{2}})$.
\end{proof}

\section{Hypotheses for the construction of trees with a lot of distinct palindromes}\label{sec:hypothese}
Let $T$ be a tree that maximizes the number of palindromes for its size. It is
likely that $T$ contains triples of nodes $(u,v,w)$ such that $\pi(u,v)$,
$\pi(u,w)$ and $\pi(v,w)$ are all palindromes. Suppose it is the case, and
define $T'$ as the restriction of $T$ to the paths that join $u$, $v$ and $w$.
We have that either $T'$ is a threadlike tree, or $T'$ has three leaves and a unique node of degree $3$. The first case is of no interest here since it
is equivalent to words, while the latter case implies a restrictive structure on
the factors $\pi(u,v)$, $\pi(u,w)$ and $\pi(v,w)$. We now focus on the second case
and call $x$ the unique node of $T'$ with degree $3$. 

Let $U = \pi(u,x)$, $V = \pi(v,x)$,  $W = \pi(w,x)$ and, without loss of
generality, suppose that $|U| \leq |V| \leq |W|$. Then, as shown in Figure
\ref{fig:structureTPrime}, $U\m{V}$, $U\m{W}$ and $V\m{W}$ are all
palindromes.

\begin{figure}[H]
  \begin{center}
    \begin{tikzpicture}[scale=1]
      \def\shift{0.25}
      \node (v) at (-1,0) {};
      \node (w) at (7,0)  {};
      \node (x) at (2,0)  {};
      \node (u) at (2,1)  {};
      \node (v') at (0,0) {};
      \node (w') at (6,0) {};
      \node (w'') at (4,0) {};
      \draw[->] ($(u.south)+(\shift,0)$) --node[right]{$U$} ($(x.north)+(\shift,0)$);
      \draw[->] ($(v.east)+(0,\shift)$) --node[above]{$V$} ($(x.west)+(0,\shift)$);
      \draw[->] ($(v.east)+(0,-\shift)$) --node[below]{$U$} ($(v'.west)+(0,-\shift)$);
      \draw[->] ($(v'.east)+(0,-\shift)$) --node[below]{$A$} ($(x.west)+(0,-\shift)$);
      \draw[->] ($(w.west)+(0,\shift)$) --node[above]{$W$} ($(x.east)+(0,\shift)$);
      \draw[->] ($(w.west)+(0,-\shift)$) --node[below]{$U$} ($(w'.east)+(0,-\shift)$);
      \draw[->] ($(w'.west)+(0,-\shift)$) --node[below]{$A$} ($(w''.east)+(0,-\shift)$);
      \draw[->] ($(w''.west)+(0,-\shift)$) --node[below]{$B$} ($(x.east)+(0,-\shift)$);
      \fill (v')  circle (\tailleCercle);
      \fill (w')  circle (\tailleCercle);
      \fill (w'') circle (\tailleCercle);
      \draw[ultra thick] (v) edge (x);
      \draw[ultra thick] (w) edge (x);
      \draw[ultra thick] (u) edge (x);
      \draw[fill=gray!40] (u) circle (\tailleNoeud) node {$u$};
      \draw[fill=gray!40] (v) circle (\tailleNoeud) node {$v$};
      \draw[fill=gray!40] (w) circle (\tailleNoeud) node {$w$};
      \draw[fill=gray!40] (x) circle (\tailleNoeud) node {$x$};
    \end{tikzpicture}
  \end{center}
  \caption{The structure of the tree $T'$. The palindromicity of $U\m{V}$,
  $U\m{W}$ and $V\m{W}$ forces that $V$ starts with $U$ while $W$ starts with
  both factors $U$ and $V$. \label{fig:structureTPrime}}
\end{figure}
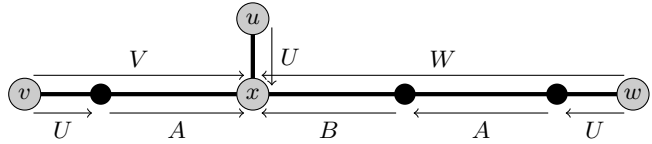

Let $A$ be the suffix of length $|V|-|U|$ of $V$. Since, by hypothesis,
$U\m{V}$ is a palindrome, $V=UA$ and $A$ is a palindrome.
Similarly, let $B$ be the suffix of length $|W|-|V|$ of W. This implies that
$W=VB=UAB$ and both $B$ and $AB$ are palindromes. Using a well-known lemma from Lothaire \cite{lothaire1}, we prove that $AB$ is
periodic.
\begin{lem}[\rm Proposition 1.3.2 in \cite{lothaire1}]\label{lothaire}
	Two words commute if and only if they are powers of the same word.
\end{lem}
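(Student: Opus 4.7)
The plan is to prove the two implications separately, with the nontrivial content lying in the ``only if'' direction.

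The ``if'' direction is immediate: if $u = z^m$ and $v = z^n$ for some word $z$ and integers $m,n \geq 0$, then $uv = z^{m+n} = z^{n+m} = vu$, so $u$ and $v$ commute.

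For the converse, I would proceed by strong induction on $|u| + |v|$. The base case is when $|u| = 0$ or $|v| = 0$; without loss of generality $u = \varepsilon$, and then we may take $z = v$, so that $u = z^0$ and $v = z^1$. For the inductive step, assume $uv = vu$ with both words nonempty, and without loss of generality $|u| \leq |v|$. Reading the equality $uv = vu$ letter by letter from the left, the first $|u|$ letters of $vu$ coincide with $u$, but they are also the first $|u|$ letters of $v$ (since $|u| \leq |v|$). Hence $u$ is a prefix of $v$, and we may write $v = uw$ for some word $w$ with $|w| < |v|$. Substituting into $uv = vu$ gives $u(uw) = (uw)u$, and cancelling the leading $u$ yields $uw = wu$.

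Now $|u| + |w| < |u| + |v|$, so the induction hypothesis applies to the pair $(u,w)$: there exists a word $z$ and integers $m, k \geq 0$ such that $u = z^m$ and $w = z^k$. Then $v = uw = z^{m+k}$, and both $u$ and $v$ are powers of the same word $z$, completing the induction. The only subtle step is the extraction $v = uw$ from the commutation identity when $u \neq \varepsilon$; everything else is bookkeeping, and I expect no genuine obstacle here since the argument is a standard exercise on the free monoid.
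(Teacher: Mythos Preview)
The paper does not supply its own proof of this lemma; it is quoted verbatim as Proposition~1.3.2 from Lothaire and used as a black box in the proof of the subsequent proposition. Your induction on $|u|+|v|$ is correct and is essentially the classical argument for this fact, so there is nothing to compare against and nothing to fix.
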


The next proposition states that the word $ABA$ is periodic and that its period
is at most the gcd of the difference of length of the three paths between $u$,
$v$ and $w$. More formally, let  
\[
  p = \gcd\left( |\pi(u,w)|-|\pi(u,v)|, |\pi(v,w)|-|\pi(u,v)|, |\pi(v,w)|-|\pi(u,w)| \right).
\]
\begin{prop}
  There exists a word $S$ and two integers $i,j$ such that $|S|$
  divides $p$ and $A = S^i$ and $B = S^j$.
\end{prop}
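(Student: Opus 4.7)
The plan is to reduce the statement to an application of Lothaire's commutation lemma (Lemma~\ref{lothaire}). The discussion preceding the proposition has already established that $A$, $B$, and $AB$ are all palindromes. From these three palindromicities I would observe that
\[
  AB = \overline{AB} = \bar B\,\bar A = BA,
\]
so $A$ and $B$ commute. Lemma~\ref{lothaire} then yields a word $S$ together with nonnegative integers $i,j$ such that $A = S^i$ and $B = S^j$, which handles the structural part of the claim.

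It remains to show that $|S|$ divides $p$, and my plan is to rewrite $p$ purely in terms of $|A|$ and $|B|$. Since every path between two of the nodes $u,v,w$ in $T'$ passes through the central node $x$, we have $|\pi(u,v)| = |U|+|V|$, $|\pi(u,w)| = |U|+|W|$, and $|\pi(v,w)| = |V|+|W|$. Substituting $V = UA$ and $W = UAB$ reduces the three differences in the definition of $p$ to $|B|$, $|A|+|B|$, and $|A|$ respectively, so
\[
  p = \gcd\bigl(|B|,\,|A|+|B|,\,|A|\bigr) = \gcd(|A|,|B|).
\]
Since $|A| = i\,|S|$ and $|B| = j\,|S|$, the length $|S|$ divides both $|A|$ and $|B|$, and therefore $|S|$ divides $p$.

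The main obstacle is essentially bookkeeping rather than substance: I need to make sure that the palindromicity of $AB$ really does follow from the hypotheses as sketched in the paragraph preceding the proposition, and to address the degenerate cases where $A$ or $B$ is empty (in which case one takes $S$ to be the nonempty factor, or $S = \varepsilon$ if both vanish, so that $|S| = 0$ divides $p = 0$). Once those technicalities are dispatched, the argument is just the commutation step feeding Lothaire's lemma, followed by a short \textsc{gcd} computation.
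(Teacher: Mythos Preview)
Your proposal is correct and follows exactly the paper's approach: the paper also derives $AB = BA$ from the three palindromicities, invokes Lemma~\ref{lothaire} to obtain $S$, $i$, $j$, and then observes that $\gcd(|A|,|B|) = p$ by construction. Your version is more explicit in carrying out the gcd computation and in flagging the degenerate cases, but the core argument is identical.
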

\begin{proof}
  Since $A$, $B$ and $AB$ are palindromes,  $AB = \m{AB} = \m{B} \m{A} = BA$.
  Thus, by lemma \ref{lothaire}, there exists a word $S$ such that $A=S^i$ and
  $B=S^j$. This implies that $|S|$ divides $\gcd(|A|,|B|)$ and, by construction,
  $\gcd(|A|,|B|) = p$.
\end{proof}

From the above proposition, we deduce that a triple of
nonaligned nodes with any path from a node to another being a
palindrome forces a local structure isomorphic to that of the hair comb tree,
as illustrated in Figure \ref{fig:structureHairComb}.

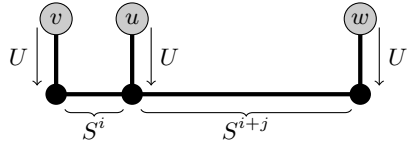
\begin{figure}[H]
  \begin{center}
    \begin{tikzpicture}[scale=1.0]
      \def\shift{0.25}
      \node (u) at (2,1) {};
      \node (v) at (1,1) {};
      \node (w) at (5,1) {};
      \node (x) at (2,0) {};
      \node (y) at (1,0) {}; \node (z) at (5,0) {};
      \fill (x) circle (\tailleCercle); 
      \fill (y) circle (\tailleCercle); 
      \fill (z) circle (\tailleCercle);
      \draw[ultra thick] (u) edge (x);
      \draw[ultra thick] (v) edge (y);
      \draw[ultra thick] (w) edge (z);
      \draw[ultra thick] (y) edge (z);
      \draw[->] ($(u.south)+(\shift,0)$) --node[right]{$U$} ($(x.north)+(\shift,0)$);
      \draw[->] ($(v.south)+(-\shift,0)$) --node[left]{$U$} ($(y.north)+(-\shift,0)$);
      \draw[->] ($(w.south)+(\shift,0)$) --node[right]{$U$} ($(z.north)+(\shift,0)$);
      \draw [decoration={brace, mirror, raise=0.2cm}, decorate] (y) -- (x)
        node [pos=0.5,anchor=north,yshift=-0.2cm] {$S^i$}; 
      \draw [decoration={brace, mirror, raise=0.2cm},decorate] (x) -- (z) node
        [pos=0.5,anchor=north,yshift=-0.2cm] {$S^{i+j}$}; 

        \draw[fill=gray!40] (u) circle (\tailleNoeud) node {$u$};
        \draw[fill=gray!40] (v) circle (\tailleNoeud) node {$v$};
        \draw[fill=gray!40] (w) circle (\tailleNoeud) node {$w$};
      \end{tikzpicture}
  \end{center}
  \caption{A triple of nodes with palindromes between each pair of them is
    isomorphic to a part of a hair comb.\label{fig:structureHairComb}
  }
\end{figure}

In a more general way, suppose that a tree contains $m$ leaves $(u_i)_{1 \leq i
\leq m}$, and that each $\pi(u_i,u_j)$ is a palindrome. Let $T'$ be the
restriction of this tree to the paths that connect these leaves and, for
each $i$, let $v_i$ be the first node of degree higher than $2$ accessible
from the leaf $u_i$ in $T'$. By applying the above proposition to each triplet 
$(u_i,u_j,u_k)$, for all $i \neq j$, the word $\pi(u_i,u_j)$ is of
the form 
\[
  \pi(u_i,u_j) = U S^+ \m{U},
\]
where $|U| = \min_i( \pi(u_i, v_i) )$ and $|S|$ divides 
$\displaystyle \gcd_{i \neq j, k\neq l} \left( \big| |\pi(u_i,u_j)| -
|\pi(u_k,u_l)| \big| \right)$.

Moreover, in order to maximize the number of palindromes relatively to the size of the tree, we can choose $S$ to be a single letter. This is indeed possible since the only condition on the length of $S$ is that it divides all the differences of lengths between any palindromic path from a leaf to another.\\

This gives a tree analogous to those presented in section \ref{sssec peigne}, $\C_p$, and for which we have established that $|\Pal(\C_p)| \in \Theta(|\C_p|^{\frac{3}{2}})$. Therefore, we conjecture that \mbox{$\P_\infty(n) \in \Theta(n^{\frac{3}{2}})$}.

\bibliographystyle{splncs03.bst}
\bibliography{Bibliographie}

\begin{thebibliography}{10}
\providecommand{\url}[1]{\texttt{#1}}
\providecommand{\urlprefix}{URL }

\bibitem{abcd}
Allouche, J.P., Baake, M., Cassaigne, J., Damanik, D.: Palindrome complexity.
  Theoretical Computer Science  292(1),  9--31 (2003)

\bibitem{PrBrRe}
Balkov\'{a}, L., Pelantov\'{a}, E., \v{S}t\v{e}p\'an Starosta.: Proof of the
  {B}rlek-{R}eutenauer conjecture. Theoretical Computer Science  475,  120--125
  (2013)

\bibitem{BV}
Berth{\'e}, V., Vuillon, L.: Tilings and rotations on the torus: a
  two-dimensional generalization of {S}turmian sequences. Discrete Mathematics
  223(1-3),  27--53 (2000)

\bibitem{bhnr}
Brlek, S., Hamel, S., Nivat, M., Reutenauer, C.: On the palindromic complexity
  of infinite words. International Journal on Foundation of Computer Science
  15(2),  293--306 (2004)

\bibitem{BrRe}
Brlek, S., Reutenauer, C.: Complexity and palindromic defect of infinite words.
  Theoretical Computer Science  412(4-5),  493--497 (2011)

\bibitem{squaresInTree}
Crochemore, M., Iliopoulos, C.S., Kociumaka, T., Kubica, M., Radoszewski, J.,
  Rytter, W., Tyczynski, W., Walen, T.: The maximum number of squares in a
  tree. In: Combinatorial Pattern Matching - 23rd Annual Symposium, {CPM} 2012,
  Helsinki, Finland, July 3-5, 2012. Proceedings. pp. 27--40 (2012)

\bibitem{deLu}
{de Luca}, A.: Sturmian words: Structure, combinatorics, and their arithmetics.
  Theoretical Computer Science  183(1),  45--82 (1997)

\bibitem{DPV}
Domenjoud, E., Proven\c{c}al, X., Vuillon, L.: Palindromic language of thin
  discrete planes  (To appear)

\bibitem{DV12}
Domenjoud, E., Vuillon, L.: Geometric palindromic closure. Uniform Distribution
  Theory  7(2),  109--140 (2012)

\bibitem{DrJuPi}
Droubay, X., Justin, J., Pirillo, G.: Episturmian words and some constructions
  of de {L}uca and {R}auzy. Theoretical Computer Science  255(1-2),  539--553
  (2001)

\bibitem{ET}
Erd{\"o}s, P., Tur{\'a}n, P.: On a problem of {S}idon in additive number
  theory, and on some related problems. Journal of the London Mathematical
  Society. Second Series  16,  212--215 (1941)

\bibitem{FraenkelSimpson1998}
Fraenkel, A.S., Simpson, J.: How many squares can a string contain? J. Combin.
  Theory Ser. A  82(1),  112--120 (1998)

\bibitem{GJ}
Glen, A., Justin, J.: Episturmian words: a survey. Theoretical Informatics and
  Applications. Informatique Th\'eorique et Applications  43(3),  403--442
  (2009)

\bibitem{Gowers}
Gowers, T.: What are dense {S}idon subsets of $\{1,2, \ldots, n\}$ like?
  (2012),
  \url{gowers.wordpress.com/2012/07/13/what-are-dense-sidon-subsets-of-12-n-like/}

\bibitem{hks}
Hof, A., Knill, O., Simon, B.: Singular continuous spectrum for palindromic
  {S}chr{\"o}dinger operators. Comm. in Mathematical Physics  174(1),  149--159
  (1995)

\bibitem{LR}
Labb\'e, S., Reutenauer, C.: A $d$-dimensional extension of {C}hristoffel
  words. Discrete \& Computational Geometry  (2015), preprint :
  \url{http://arxiv.org/abs/1404.4021}

\bibitem{lothaire1}
Lothaire, M.: Combinatorics on Words. Cambridge University Press, Cambridge
  (1997)

\end{thebibliography}

\end{document}